\newcommand{\Var}{\mathrm{Var}}
\newcommand{\I}{\mathrm{I}}
\newcommand{\Nov}{\mathrm{Nov}}
\newcommand{\Com}{\mathrm{Com}}
\newcommand{\T}{\mathrm{T}}
\newcommand{\As}{\mathrm{As}}
\newcommand{\Lie}{\mathrm{Lie}}
\newcommand{\Perm}{\mathrm{Perm}}
\newcommand{\Leib}{\mathrm{Leib}}
\newcommand{\Dend}{\mathrm{Dend}}
\newcommand{\pre}{\mathrm{pre}}
\newcommand{\di}{\mathrm{di}}
\newcommand{\LS}{\mathrm{LS}}
\newcommand{\Alt}{\mathrm{Alt}}
\newcommand{\Nilp}{\mathrm{Nilp}}
\newcommand{\R}{\mathrm{R}}
\title{
    Varieties of initial dialgebras and some of their Koszul dual operads
    }
\authors{
    Aigerim Dauletiyarova, Abdenacer Makhlouf, and Bauyrzhan Sartayev
    }
\abstract{%
In this paper, for a given variety $\Var$, we present a universal algorithm for constructing a subvariety of $\Var$-dialgebras from which one can recover an algebra belonging to $\Var$. Such a subvariety is called the variety of initial $\Var$-dialgebras. In addition, we construct a basis of the free initial Lie and associative dialgebras.
}
\keywords{
    Dialgebra, Operad, Free algebra.}
\begin{document}

\section{Introduction}

Dialgebras were introduced by Loday as a two-operation counterpart of classical algebraic structures \cite{Loday}. 

In the theory of dialgebras, the first natural Lie-type objects are non-skew-symmetric analogues of Lie algebras, namely Leibniz algebras \cite{Blokh}. 
They are defined by the Leibniz identity
\[
(ab)c \;=\; a(bc)\;-\;b(ac).
\]
In the literature, Leibniz algebras are sometimes called a dialgebraic analogue of Lie algebras, since the skew-symmetry is dropped while the derivation property of the left multiplication is preserved \cite{LodPir}.

In operadic terms, associative dialgebras are algebras over the diassociative operad $\di\text{-}\As$, which is Koszul dual to the dendriform operad $\Dend$ \cite{LV}. 
This duality explains the splitting of identities and provides a convenient language for studying dialgebraic analogues of operadic constructions, including Manin products. Following this approach, Kolesnikov introduced a general procedure which assigns to a variety (operad) $\Var$ its dialgebraic analogue $\di\text{-}\Var$ and relates $\Var$-dialgebras to conformal and pseudo-algebraic realizations; in particular, $\Var$-dialgebras embed into suitable pseudo-algebras of type $\Var$ \cite{KolVar}.

In \cite{DongProp}, it was proved that the Dong property is preserved under the dialgebraic construction: if an operad $\Var$ satisfies the Dong property, then the operad $\di\text{-}\Var$ also satisfies the Dong property. 
In \cite{SartDzhMashNovikov}, an embedding of Novikov dialgebras into perm algebras endowed with a derivation was considered.

It is well known that the white Manin product \cite{GK94} yields the dialgebra version $\di\text{-}\Var$ of the original variety $\Var$; namely,
\[
\Perm\circ\Var=\di\text{-}\Var.
\]
Analogously, under the black Manin product, the following equality holds:
\[
\pre\text{-}\Lie \bullet \Var=\pre\text{-}\Var.
\]
Such relations lead to new interesting classes of algebras. For example, if
$\Var=\Nov$, then we obtain
\[
(\di\text{-}\Nov)^{(!)}=(\Perm\circ\Nov)^{(!)}=\pre\text{-}\Nov=\pre\text{-}\Lie\bullet\Nov,
\]
where $\Nov$ denotes the operad corresponding to the variety of Novikov algebras. For recent results on Novikov algebras, see \cite{DauSar, DotZhak, GaoGuoHanZhang, KolMashSar, LiHongQFNov}.

Another interesting result in this direction is that
\[
\dim(\di\text{-}\Var(n))=n\cdot k_n,
\]
for a given $\Var$ with $\dim(\Var(n))=k_n$, see \cite{Kol2017}. 
The same paper also provides an algorithm for constructing a basis of $\di\text{-}\Var\langle X\rangle$ from a given basis of $\Var\langle X\rangle$.

If we define a new operation $\star$ in $\pre\text{-}\Var\langle X\rangle$ by
\[
a \star b =\frac{1}{2}(a \vdash b + a \dashv b),
\]
then the resulting algebra $\pre\text{-}\Var^{(\star)}\langle X\rangle$ belongs to the variety $\Var$. However, we do not have an analogous general procedure for deriving the original algebra from its dialgebra version. For this reason, we propose an algorithm that helps to recover the original algebra from its dialgebra version.

It is known that not every algebra from $\pre\text{-}\Var^{(\star)}$ can be embedded into some algebra of $\pre\text{-}\Var$. As the first example, if $\Var=\Com$, then $\pre\text{-}\Com$ is the variety of Zinbiel algebras. In \cite{KolZinb}, it was given an example of an associative-commutative algebra that cannot be embedded into some appropriate algebra from the variety of Zinbiel algebras under the operation $\star$.

In this paper, we propose a universal algorithm that, for a given variety $\Var$, constructs a subvariety of $\Var$-dialgebras with the property that the $\star$-product recovers an algebra belonging to $\Var$. 
The main advantage of this construction is that, in the commutative case $\Var=\Com$, every associative-commutative algebra admits an embedding into a suitable algebra from this subvariety of $\di\text{-}\Com$, viewed with the operation $\star$. We also compute the Koszul dual operad of $\di$-$\Var^{\I}$ in several cases, where $\di$-$\Var^{\I}$ denotes the variety of initial $\Var$-dialgebras.
For $\Var=\Com$ and $\Lie$, we obtain the generalizations of the varieties of left-symmetric algebras and Zinbiel algebras. Moreover, if we define the operation $\star$ in $\di$-$\Var^{\I\;(!)}$, then as in the classical cases we obtain Lie and associative-commutative algebras. For recent results on Zinbiel algebras, we refer the reader to \cite{Zinb2, KolMashSar, Zinb3}.

Since varieties defined by polynomial identities of degrees $2$ and $3$ are in one-to-one correspondence with quadratic operads, we will use the same terminology for a variety $\Var$ and for the corresponding operad throughout the paper. 
We denote by $\Com$ the variety of associative-commutative algebras. 
All algebras are considered over a field $\mathbb{K}$ of characteristic $0$.

\section{Initial associative and pre-Lie dialgebras}

In this section, we present several concrete cases that yield the required subvarieties within the varieties of associative and pre-Lie dialgebras.

An algebra with one operation is called associative if it satisfies the identity
\[
(ab)c = a(bc).
\]

Every dialgebra satisfies $0$-identities, which are
\[
a\dashv(b\dashv c)=a\dashv(b\vdash c)
\]
and
\[
(a\vdash b)\vdash c=(a\dashv b)\vdash c.
\]

\begin{definition}
An associative dialgebra is a vector space with two bilinear operations satisfying the following additional identities:
\[
(a\dashv b)\dashv c=a\dashv(b\dashv c),
\]
\[
(a\vdash b)\dashv c=a\vdash(b\dashv c)
\]
and
\[
(a\vdash b)\vdash c=a\vdash(b\vdash c).
\]
\end{definition}

We say that an associative dialgebra is initial if, in addition, it satisfies the identity
\begin{equation}\label{as1}
(a\vdash b)\vdash c=a\dashv(b\dashv c).
\end{equation}
Let us denote by $\di$-$\As^{\I}$ and $\di$-$\As^{\I}\<X\>$ the variety of initial associative dialgebras and the corresponding free algebra from this variety, respectively.
\begin{proposition}
An algebra $\di$-$\As^{\I\;(\star)}\<X\>$ is associative.
\end{proposition}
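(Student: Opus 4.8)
The plan is to verify associativity of the $\star$-product by a direct expansion, reducing both $(a\star b)\star c$ and $a\star(b\star c)$ to a common family of right-normed monomials in $\vdash$ and $\dashv$, and then checking that the single residual discrepancy is precisely the expression annihilated by the initial identity \eqref{as1}. First I would substitute $a\star b=\tfrac12(a\vdash b+a\dashv b)$ into each side, so that $(a\star b)\star c$ and $a\star(b\star c)$ each become a linear combination of four triple products. At this first stage the two $0$-identities collapse a pair of terms on each side: from $(a\vdash b)\vdash c=(a\dashv b)\vdash c$ the two outer-left products appearing in $(a\star b)\vdash c$ coincide, and from $a\dashv(b\dashv c)=a\dashv(b\vdash c)$ the two inner products appearing in $a\dashv(b\star c)$ coincide. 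This yields
\[
(a\star b)\star c=\tfrac12(a\vdash b)\vdash c+\tfrac14(a\vdash b)\dashv c+\tfrac14(a\dashv b)\dashv c
\]
and
\[
a\star(b\star c)=\tfrac14\,a\vdash(b\vdash c)+\tfrac14\,a\vdash(b\dashv c)+\tfrac12\,a\dashv(b\dashv c).
\]

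Next I would apply the three defining identities of an associative dialgebra to move every left-normed product to right-normed form, namely $(a\dashv b)\dashv c=a\dashv(b\dashv c)$, $(a\vdash b)\dashv c=a\vdash(b\dashv c)$, and $(a\vdash b)\vdash c=a\vdash(b\vdash c)$. After these substitutions the coefficients of $a\vdash(b\dashv c)$ agree on both sides, and the difference of the two expressions reduces to the single term
\[
(a\star b)\star c-a\star(b\star c)=\tfrac14\bigl(a\vdash(b\vdash c)-a\dashv(b\dashv c)\bigr).
\]

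The decisive step — and the only place where the \emph{initial} hypothesis enters — is to observe that this residual term vanishes: associativity gives $a\vdash(b\vdash c)=(a\vdash b)\vdash c$, while the initial identity \eqref{as1} gives $(a\vdash b)\vdash c=a\dashv(b\dashv c)$, so $a\vdash(b\vdash c)=a\dashv(b\dashv c)$ and the difference is $0$. I expect no genuine obstacle in this argument: it is essentially a coefficient-bookkeeping computation whose one conceptual ingredient is that \eqref{as1} is exactly the relation needed to reconcile the two extreme monomials $a\vdash(b\vdash c)$ and $a\dashv(b\dashv c)$ that the symmetrized product produces with unequal weights. It is worth emphasizing that without \eqref{as1} this discrepancy would not cancel, which is precisely why a general associative dialgebra fails to be associative under $\star$ and why the initial condition is essential.
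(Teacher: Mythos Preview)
Your proof is correct and follows essentially the same approach as the paper: expand both sides of $(a\star b)\star c=a\star(b\star c)$ into the four $\vdash/\dashv$ monomials, reduce using the $0$-identities and the three diassociative identities, and observe that the residual difference is exactly the initial identity~\eqref{as1}. Your version is merely more explicit in tracking the $\tfrac14$ coefficients and in isolating where each identity is used, but the argument is the same.
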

\begin{proof}
It is enough to check that 
\[
(a\star b)\star c=a\star (b\star c).
\]
On the left and right sides, we obtain
\[
(a\dashv b)\dashv c+(a\vdash b)\dashv c+(a\dashv b)\vdash c+(a\vdash b)\vdash c
\]
and 
\[
a\dashv(b\dashv c)+a\dashv(b\vdash c)+a\vdash(b\dashv c)+a\vdash(b\vdash c),
\]
respectively. Using the defining identities of the associative dialgebra, we perform the same reductions and, in the end, obtain exactly identity~\eqref{as1}.
\end{proof}

\begin{definition}
An algebra is called pre-Lie (or, in other sources, left-symmetric algebras \cite{LS1,LS2,LS3}, 
or Vinberg–Koszul–Gerstenhaber algebras introduced in \cite{Gerst, Vin, k61}) if it satisfies the identity
\[
(ab)c-a(bc) = (ba)c-b(ac).
\]
\end{definition}
\begin{definition}
A left-symmetric dialgebra is a vector space with two operations satisfying the following additional identities:
\[
(a\vdash b)\dashv c-a\vdash(b\dashv c)=(b\dashv a)\dashv c-b\dashv(a\dashv c)
\]
and
\[
(a\vdash b)\vdash c-a\vdash(b\vdash c)=(b\vdash a)\vdash c-b\vdash(a\vdash c).
\]
\end{definition}

We call a left-symmetric dialgebra initial if it is a left-symmetric dialgebra with identity
\begin{equation}\label{ls1}
(a\vdash b)\vdash c-a\dashv(b\dashv c) = (b\vdash a)\vdash c-b\dashv(a\dashv c).    
\end{equation}
Let us denote by $\di$-$\LS^{\I}$ and $\di$-$\LS^{\I}\<X\>$ the variety of initial left-symmetric dialgebras and the corresponding free algebra from this variety, respectively.
\begin{proposition}
An algebra $\di$-$\LS^{\I\;(\star)}\<X\>$ is left-symmetric.
\end{proposition}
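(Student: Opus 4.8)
The plan is to follow the pattern of the preceding proposition: it suffices to verify that $\star$ satisfies the left-symmetric identity, i.e. that the $\star$-associator is symmetric in its first two arguments,
\[
(a\star b)\star c-a\star(b\star c)=(b\star a)\star c-b\star(a\star c).
\]
First I would expand every occurrence of $\star$ via $a\star b=\tfrac12(a\vdash b+a\dashv b)$. Up to the common factor $\tfrac14$, the left-hand associator becomes
\[
(a\vdash b)\vdash c+(a\dashv b)\vdash c+(a\vdash b)\dashv c+(a\dashv b)\dashv c-a\vdash(b\vdash c)-a\vdash(b\dashv c)-a\dashv(b\vdash c)-a\dashv(b\dashv c),
\]
and the right-hand associator is obtained from it by interchanging $a$ and $b$. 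The difference of the two associators is what I must show vanishes.

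Next I would simplify using the two $0$-identities $a\dashv(b\dashv c)=a\dashv(b\vdash c)$ and $(a\vdash b)\vdash c=(a\dashv b)\vdash c$ to collapse the mixed terms. The decisive step is then a regrouping: the reduced left-hand associator can be written as the sum of the purely left defect $(a\vdash b)\vdash c-a\vdash(b\vdash c)$, the mixed defect $(a\vdash b)\dashv c-a\vdash(b\dashv c)$, the initial defect $(a\vdash b)\vdash c-a\dashv(b\dashv c)$ from \eqref{ls1}, and a leftover purely right term $(a\dashv b)\dashv c-a\dashv(b\dashv c)$. The first of these matches its $a\leftrightarrow b$ swap by the second defining identity of left-symmetric dialgebras, and the third by \eqref{ls1}, so those two pieces cancel directly against their counterparts in the right-hand associator.

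The main obstacle is the leftover purely right associator $(a\dashv b)\dashv c-a\dashv(b\dashv c)$, which is \emph{not} symmetric in $a,b$ on its own, since a left-symmetric dialgebra need not be associative with respect to $\dashv$. The point is that the first defining identity, read as $(a\vdash b)\dashv c-a\vdash(b\dashv c)=(b\dashv a)\dashv c-b\dashv(a\dashv c)$, identifies each purely right associator with the mixed defect of the opposite order. Applying it once in each variable order shows that the leftover right term on one side equals the mixed defect on the other side, so the two remaining contributions cancel in pairs and the whole difference vanishes, proving that $\star$ is left-symmetric. I expect this bookkeeping of the $\dashv$-associator through the first defining identity to be the only genuinely non-automatic part of the argument.
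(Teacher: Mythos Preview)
Your proposal is correct and follows essentially the same approach as the paper: expand the $\star$-associator, apply the $0$-identities and the two defining identities of the left-symmetric dialgebra, and observe that what remains is precisely~\eqref{ls1}. Your explicit decomposition into the four ``defects'' and the pairing of the mixed defect with the right associator of opposite order via the first defining identity makes the cancellation mechanism more transparent than the paper's terse ``perform the same reductions,'' but the underlying argument is identical.
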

\begin{proof}
It is enough to check that 
\[
(a\star b)\star c-a\star (b\star c)=(b\star a)\star c-b\star (a\star c).
\]
On the left and right hand sides, we obtain
\begin{multline*}
(a\dashv b)\dashv c+(a\vdash b)\dashv c+(a\dashv b)\vdash c+(a\vdash b)\vdash c\\
-a\dashv(b\dashv c)-a\dashv(b\vdash c)-a\vdash(b\dashv c)-a\vdash(b\vdash c)    
\end{multline*}
and
\begin{multline*}
(b\dashv a)\dashv c+(b\vdash a)\dashv c+(b\dashv a)\vdash c+(b\vdash a)\vdash c\\
-b\dashv(a\dashv c)-b\dashv(a\vdash c)-b\vdash(a\dashv c)-b\vdash(a\vdash c).    
\end{multline*}
Using the defining identities of the associative dialgebra, we perform the same reductions, and at
the end, we obtain exactly the identity~\eqref{ls1}.
\end{proof}

\begin{remark}
It is straightforward to verify that the variety of initial associative dialgebras is different from the variety of dendriform algebras. Moreover, neither of these varieties is contained in the other. An analogous statement holds for the varieties of initial left-symmetric dialgebras and pre-left-symmetric algebras.
\end{remark}

\section{General construction of initial dialgebras}

In this section, we give an explicit algorithm which defines the variety of initial dialgebras from the variety of algebras $\Var$.

\begin{theorem}\label{initial}
Let $\Var$ be a variety of algebras defined by a single identity of the form
\begin{equation}\label{generalID}
    \sum_{\sigma \in \mathbb{S}_3} \alpha_\sigma \,(x_{\sigma(1)}x_{\sigma(2)})x_{\sigma(3)}
    +
    \sum_{\sigma \in \mathbb{S}_3} \beta_\sigma \,x_{\sigma(1)}(x_{\sigma(2)}x_{\sigma(3)}) \;=\; 0.
\end{equation}
\noindent Then for a $\Var$-dialgebra $A$, the algebra $A^{(\star)}$ belongs to $\Var$ if and only if $A$ satisfies the following additional identity:
\begin{equation}\label{initialID}
\sum_{\sigma \in \mathbb{S}_3} \alpha_\sigma \,(x_{\sigma(1)} \vdash x_{\sigma(2)}) \vdash x_{\sigma(3)}
    +
    \sum_{\sigma \in \mathbb{S}_3} \beta_\sigma \,x_{\sigma(1)} \dashv (x_{\sigma(2)} \dashv x_{\sigma(3)}) \;=\; 0.    
\end{equation}
\end{theorem}
\begin{proof}
The single defining identity of $\Var$ is
\[
\begin{aligned}
& \ \alpha_{123}(x_1 x_2)x_3
+\alpha_{132}(x_1 x_3)x_2
+\alpha_{213}(x_2 x_1)x_3\\
+& \ \alpha_{231}(x_2 x_3)x_1 
+\alpha_{312}(x_3 x_1)x_2
+\alpha_{321}(x_3 x_2)x_1 \\
+& \ \beta_{123} x_1(x_2 x_3)
+\beta_{132} x_1(x_3 x_2)
+\beta_{213} x_2(x_1 x_3)\\
+& \ \beta_{231} x_2(x_3 x_1)
+\beta_{312} x_3(x_1 x_2)
+\beta_{321} x_3(x_2 x_1)
= 0.
\end{aligned}
\]
Then $\Var$ dialgebra is defined by the following three additional identities:
\[
\begin{aligned}
& \ \alpha_{123}(x_1 \dashv x_2)\dashv x_3
+\alpha_{132}(x_1 \dashv x_3)\dashv x_2
+\alpha_{213}(x_2 \vdash x_1)\dashv x_3\\
+& \ \alpha_{231}(x_2 \vdash x_3)\vdash x_1
+\alpha_{312}(x_3 \vdash x_1)\dashv x_2
+\alpha_{321}(x_3 \vdash x_2)\vdash x_1\\
+& \ \beta_{123}\,x_1\dashv(x_2\vdash x_3)
+\beta_{132}\,x_1\dashv(x_3\vdash x_2)
+\beta_{213}\,x_2\vdash(x_1\dashv x_3)\\
+& \ \beta_{231}\,x_2\vdash(x_3\vdash x_1)
+\beta_{312}\,x_3\vdash(x_1\dashv x_2)
+\beta_{321}\,x_3\vdash(x_2\vdash x_1)=0,
\end{aligned}
\]
\[
\begin{aligned}
& \ \alpha_{123}(x_1 \vdash x_2)\dashv x_3
+\alpha_{132}(x_1 \vdash x_3)\vdash x_2
+\alpha_{213}(x_2 \dashv x_1)\dashv x_3\\
+& \ \alpha_{231}(x_2 \dashv x_3)\dashv x_1
+\alpha_{312}(x_3 \vdash x_1)\vdash x_2
+\alpha_{321}(x_3 \vdash x_2)\dashv x_1\\
+& \ \beta_{123}\,x_1\vdash(x_2\dashv x_3)
+\beta_{132}\,x_1\vdash(x_3\vdash x_2)
+\beta_{213}\,x_2\dashv(x_1\vdash x_3)\\
+& \ \beta_{231}\,x_2\dashv(x_3\vdash x_1)
+\beta_{312}\,x_3\vdash(x_1\vdash x_2)
+\beta_{321}\,x_3\vdash(x_2\dashv x_1)=0
\end{aligned}
\]
and
\[
\begin{aligned}
& \ \alpha_{123}(x_1 \vdash x_2)\vdash x_3
+\alpha_{132}(x_1 \vdash x_3)\dashv x_2
+\alpha_{213}(x_2 \vdash x_1)\vdash x_3\\
+& \ \alpha_{231}(x_2 \vdash x_3)\dashv x_1
+\alpha_{312}(x_3 \dashv x_1)\dashv x_2
+\alpha_{321}(x_3 \dashv x_2)\dashv x_1\\
+& \ \beta_{123}\,x_1\vdash(x_2\vdash x_3)
+\beta_{132}\,x_1\vdash(x_3\dashv x_2)
+\beta_{213}\,x_2\vdash(x_1\vdash x_3)\\
+& \ \beta_{231}\,x_2\vdash(x_3\dashv x_1)
+\beta_{312}\,x_3\dashv(x_1\vdash x_2)
+\beta_{321}\,x_3\dashv(x_2\vdash x_1)=0.
\end{aligned}
\]
We show that the identity
\[
\begin{aligned}
& \ \alpha_{123}(x_1\star x_2)\star x_3
+\alpha_{132}(x_1\star x_3)\star x_2
+\alpha_{213}(x_2\star x_1)\star x_3 
+\alpha_{231}(x_2\star x_3)\star x_1 \\
+& \ \alpha_{312}(x_3\star x_1)\star x_2
+\alpha_{321}(x_3\star x_2)\star x_1 
+\beta_{123} x_1\star(x_2\star x_3)
+\beta_{132} x_1\star(x_3\star x_2)\\
+& \ \beta_{213} x_2\star(x_1\star x_3)
+\beta_{231} x_2\star(x_3\star x_1)
+\beta_{312} x_3\star(x_1\star x_2)
+\beta_{321} x_3\star(x_2\star x_1)
= 0.
\end{aligned}
\]
modulo the identities of $\Var$ dialgebra gives \eqref{initialID}. By $a\star b=a\vdash b+a\dashv b$, we have
\[
\begin{aligned}
0=& \ \alpha_{123}\Bigl(
  (x_1\vdash x_2)\vdash x_3
 +(x_1\dashv x_2)\vdash x_3
 +(x_1\vdash x_2)\dashv x_3
 +(x_1\dashv x_2)\dashv x_3
\Bigr)\\
+& \ \alpha_{132}\Bigl(
  (x_1\vdash x_3)\vdash x_2
 +(x_1\dashv x_3)\vdash x_2
 +(x_1\vdash x_3)\dashv x_2
 +(x_1\dashv x_3)\dashv x_2
\Bigr)\\
+& \ \alpha_{213}\Bigl(
  (x_2\vdash x_1)\vdash x_3
 +(x_2\dashv x_1)\vdash x_3
 +(x_2\vdash x_1)\dashv x_3
 +(x_2\dashv x_1)\dashv x_3
\Bigr)\\
+& \ \alpha_{231}\Bigl(
  (x_2\vdash x_3)\vdash x_1
 +(x_2\dashv x_3)\vdash x_1
 +(x_2\vdash x_3)\dashv x_1
 +(x_2\dashv x_3)\dashv x_1
\Bigr)\\
+& \ \alpha_{312}\Bigl(
  (x_3\vdash x_1)\vdash x_2
 +(x_3\dashv x_1)\vdash x_2
 +(x_3\vdash x_1)\dashv x_2
 +(x_3\dashv x_1)\dashv x_2
\Bigr)\\
+& \ \alpha_{321}\Bigl(
  (x_3\vdash x_2)\vdash x_1
 +(x_3\dashv x_2)\vdash x_1
 +(x_3\vdash x_2)\dashv x_1
 +(x_3\dashv x_2)\dashv x_1
\Bigr)\\
+& \ \beta_{123}\Bigl(
  x_1\vdash(x_2\vdash x_3)
 +x_1\vdash(x_2\dashv x_3)
 +x_1\dashv(x_2\vdash x_3)
 +x_1\dashv(x_2\dashv x_3)
\Bigr)\\
\end{aligned}
\]
\[
\begin{aligned}
+& \ \beta_{132}\Bigl(
  x_1\vdash(x_3\vdash x_2)
 +x_1\vdash(x_3\dashv x_2)
 +x_1\dashv(x_3\vdash x_2)
 +x_1\dashv(x_3\dashv x_2)
\Bigr)\\
+& \ \beta_{213}\Bigl(
  x_2\vdash(x_1\vdash x_3)
 +x_2\vdash(x_1\dashv x_3)
 +x_2\dashv(x_1\vdash x_3)
 +x_2\dashv(x_1\dashv x_3)
\Bigr)\\
+& \ \beta_{231}\Bigl(
  x_2\vdash(x_3\vdash x_1)
 +x_2\vdash(x_3\dashv x_1)
 +x_2\dashv(x_3\vdash x_1)
 +x_2\dashv(x_3\dashv x_1)
\Bigr)\\
+& \ \beta_{312}\Bigl(
  x_3\vdash(x_1\vdash x_2)
 +x_3\vdash(x_1\dashv x_2)
 +x_3\dashv(x_1\vdash x_2)
 +x_3\dashv(x_1\dashv x_2)
\Bigr)\\
+& \ \beta_{321}\Bigl(
  x_3\vdash(x_2\vdash x_1)
 +x_3\vdash(x_2\dashv x_1)
 +x_3\dashv(x_2\vdash x_1)
 +x_3\dashv(x_2\dashv x_1)
\Bigr).\\
\end{aligned}
\]
Using the defining identities of the $\Var$-dialgebra, we directly obtain the identity \eqref{initialID}.
\end{proof}

Theorem \ref{initial} motivates the following definition.

\begin{definition}\label{definitial}
Let $\Var$ be a variety of algebras defined by a single identity of the form \eqref{generalID}. Then the variety of initial $\Var$-dialgebras is obtained from the variety of $\Var$-dialgebras by adjoining an additional identity of the form \eqref{initialID}.
\end{definition}

\begin{remark}
In fact, Theorem~\ref{initial} and Definition~\ref{definitial} extend naturally to varieties
defined by several identities. More precisely, suppose that a variety $\Var$ is defined
by a family of identities
\[
f_i(x_1,x_2,x_3)=0,\qquad i\in I,
\]
where each $f_i$ is of the form \eqref{generalID}. For every such identity, we consider
the corresponding dialgebra identity $f_i^{\I}=0$ obtained from \eqref{initialID}.
Then the variety of initial $\Var$-dialgebras is defined as the subvariety of
$\Var$-dialgebras satisfying all identities $f_i^{\I}=0$, $i\in I$.
In particular, if $A$ belongs to this variety, then the algebra $A^{(\star)}$ belongs
to $\Var$.
\end{remark}

\begin{example}
Consider the variety $\Var = \Com$ of commutative-associative algebras. In this case
$\di$-$\Com = \Perm$, the variety of perm algebras defined by associativity and
right-commutativity. The initial commutative dialgebra then coincides with a perm algebra
satisfying the additional identity
\[
  (a \vdash b) \vdash c = a \dashv (b \dashv c).
\]
Using commutativity $a\vdash b=b\dashv a$ in $\di$-$\Com$, it follows that $\di$-$\Com^{\I}$ is a two-sided perm algebra, that is, an associative algebra satisfying the identities of left- and right-commutativity.
For further details on perm algebras, see \cite{perm2,binaryperm}.
\end{example}

\begin{example}
Now let $\Var = \Lie$. It is well known that $\di$-$\Lie = \Leib$, where $\Leib$ denotes
the variety of Leibniz algebras. In this situation, the initial Lie dialgebra coincides
with a Leibniz algebra satisfying the additional identity
\[
  a\vdash (b\vdash c)-a\vdash (c\vdash b)-b\vdash (a\vdash c)+b\vdash (c\vdash a)
  +c\vdash (a\vdash b)-c\vdash (b\vdash a)=0.
\]
\end{example}

\begin{example}
Let $\Var=\Alt$ be the variety of alternative algebras. Then $\di$-$\Alt^{\I}$ is the subvariety of $\di$-$\Alt$ defined by the additional identities
\[
(a\vdash b)\vdash c-a\dashv(b \dashv c)+(a\vdash c)\vdash b-a\dashv(c \dashv b)=0
\]
and
\[
(a\vdash b)\vdash c-a\dashv(b\dashv c)+(b\vdash a)\vdash c-b\dashv(a\dashv c)=0.
\]
\end{example}

\begin{remark}
It is tempting to expect an equality of operads
\[
\T\text{-}\Perm \circ \Var \;=\; \di\text{-}\Var^{\I},
\]
where $\T\text{-}\Perm$ denotes the variety of two-sided perm algebras and $\circ$ is the white Manin product of two quadratic operads. 
In general, this equality fails. 

Indeed, take $\Var=\Lie$. Then the operad $\T\text{-}\Perm\circ\Lie$ yields $\di\text{-}\Lie^{\I}$ together with an additional identity, namely
\[
a\vdash(b\vdash c)+(b\vdash c)\vdash a=0.
\]

Similarly, one may ask whether
\[
\di\text{-}\Lie^{\I}\bullet\Var=\di\text{-}\Var^{\I}
\]
holds. However, this equality fails for \(\Var=\R\textrm{-}\Nilp\), where \(\R\textrm{-}\Nilp\) denotes the right-nilpotent quadratic operad. To compute \(\di\text{-}\Lie^{\I}\bullet\R\textrm{-}\Nilp\), we use \cite{DongProp}.

More precisely, an algebra \(A\) with a bilinear operation \(\cdot\) is a \((\mathcal{P}\bullet\mathcal{Q})\)-algebra if and only if, for every \(\mathcal{P}^{!}\)-algebra \(B\), the space \(B\otimes A\), equipped with the operation
\[
(x\otimes a)\cdot(y\otimes b)=xy\otimes(a\dashv b)+yx\otimes(a\vdash b),
\qquad x,y\in B,\ a,b\in A,
\]
is a \(\mathcal{Q}\)-algebra.

Now let \(\mathcal{P}=\R\textrm{-}\Nilp\) and \(\mathcal{Q}=\di\text{-}\Lie^{\I}\). Then
\[
\mathcal{P}^{!}=\l\textrm{-}\Nilp,
\]
where \(\l\textrm{-}\Nilp\) is the left-nilpotent quadratic operad. Hence, the Leibniz identity
\[
((x\otimes a)(y\otimes b))(z\otimes c)
-(x\otimes a)((y\otimes b)(z\otimes c))
+(y\otimes b)((x\otimes a)(z\otimes c))=0
\]
holds on \(B\otimes A\) if and only if the following identities are satisfied:
\[
(a\vdash b)\dashv c=(a\dashv b)\dashv c=a\vdash(b\vdash c)=a\vdash(b\dashv c)=0.
\]
Moreover, all identities obtained from
\[
a(bc)-a(cb)-b(ac)+b(ca)+c(ab)-c(ba)=0
\]
follow from the identities above.

By contrast, by definition, the operad \(\di\text{-}\R\textrm{-}\Nilp^{\I}\) is defined by the identities
\[
a\dashv(b\dashv c)=a\dashv(b\vdash c)=a\vdash(b\vdash c)=a\vdash(b\dashv c)=0
\]
together with
\[
(a\vdash b)\vdash c=(a\dashv b)\vdash c.
\]
Therefore, the two operads do not coincide, since the last identity does not appear in the black Manin product.
\end{remark}

\section{The free initial Lie and associative dialgebras}

In this section, we construct a linear basis of the free initial Lie dialgebra and the free initial associative dialgebra.
Before stating the results, we fix the standard notation
\[
[a,b]=ab-ba,\qquad \{a,b\}=ab+ba,
\]
where $[a,b]$ and $\{a,b\}$ denote the commutator and the anti-commutator, respectively.
\begin{lemma}\label{lemma1}
In the algebra $\di$-$\Lie^{\I}\<X\>$ the following identities hold:
\begin{align}
&[\{a,b\},c]= -\,\{\{a,b\},c\},\label{1}\\
&\{a,[b,c]\}=\{\{a,c\},b\} - \{\{a,b\},c\},\label{2}\\
&[[a,b],c]+[[b,c],a]+[[c,a],b]=0,\label{3}\\
&\{\{\{a,b\},d\},c\}= \{\{\{a,b\},c\},d\},\label{4}\\
&\{\{a,b\},\{c,d\}\}=0.\label{5}
\end{align}
\end{lemma}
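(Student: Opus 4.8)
The plan is to verify all five identities by direct computation inside $\di$-$\Lie^{\I}\<X\>$, after rewriting the two brackets $[\,\cdot\,,\cdot\,]$ and $\{\cdot,\cdot\}$ in terms of the underlying dialgebra operations $\vdash$ and $\dashv$. First I would record the tools available: the two $0$-identities, the defining (Leibniz) identities of $\di$-$\Lie$, the degree-two relation coming from the split of anticommutativity (which expresses one of $\vdash,\dashv$ through the other and collapses the two products to a single Leibniz bracket, consistently with $\di$-$\Lie=\Leib$ and $\dim(\di$-$\Lie(n))=n!$), and, crucially, the initial identity of the Lie example. With these in hand, \eqref{1} and \eqref{2} are the most elementary: each is obtained by expanding $[\,\cdot\,,\cdot\,]$ and reducing via the Leibniz identity, the $0$-identities, and the anticommutativity relation, moving every monomial into left-normed form.

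Next I would exploit the logical dependencies among the five identities so as to isolate a single hard step. Identity \eqref{4} is not independent: applying the Leibniz identity to the right-nested factor gives $\{\{\{a,b\},d\},c\}-\{\{\{a,b\},c\},d\}=\{\{a,b\},\{d,c\}\}$, so \eqref{4} is equivalent to \eqref{5} modulo the already-established relations. Identity \eqref{3} I would deduce from \eqref{1}: substituting $[a,b]$ as a combination of $\{a,b\}$ and $\{b,a\}$ and applying \eqref{1} termwise, the cyclic sum $[[a,b],c]+[[b,c],a]+[[c,a],b]$ collapses to a nonzero scalar multiple of the right Jacobiator $\{a,\{b,c\}\}+\{b,\{c,a\}\}+\{c,\{a,b\}\}$. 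The point is that, modulo the Leibniz identity, this Jacobiator coincides up to a nonzero scalar with the left-hand side of the initial identity of the Lie example; hence the initial identity forces it to vanish and \eqref{3} follows. Thus everything is reduced to proving \eqref{5}.

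The main obstacle is therefore \eqref{5}, $\{\{a,b\},\{c,d\}\}=0$, which is the only genuinely degree-four statement and the only one that cannot be reached from the Leibniz identity and \eqref{1}--\eqref{3} alone. To prove it I would feed a bracket into one slot of the relations already obtained: evaluating the vanishing right Jacobiator (equivalently \eqref{initialID} specialised to $\Lie$) at $x=a$, $y=b$, $z=\{c,d\}$ produces $\{\{a,b\},\{c,d\}\}$ together with two auxiliary terms $\{a,\{b,\{c,d\}\}\}$ and $\{b,\{\{c,d\},a\}\}$; the $0$-identity $a\dashv(b\dashv c)=a\dashv(b\vdash c)$ lets me ignore the internal orientation of the inner bracket $\{c,d\}$ when it occupies a right slot, and repeated application of the Leibniz identity rewrites these auxiliary terms in normal form. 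Combining this with the antisymmetry $\{x,\{y,z\}\}=-\{x,\{z,y\}\}$ valid in any Leibniz algebra, I expect all contributions except $\{\{a,b\},\{c,d\}\}$ to cancel, yielding \eqref{5}; \eqref{4} then follows immediately. The delicate bookkeeping of these degree-four reductions, keeping track of which monomials survive once the $0$-identities neutralise the inner operations, is where the real work lies.
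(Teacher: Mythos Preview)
The paper's own proof is a one-line appeal to computer algebra, so your hand verification is necessarily a different route.  The overall strategy---polarise, exploit the Leibniz consequence $(xy+yx)z=0$, then invoke the initial identity where needed---is the right one, but your dependency analysis is inverted and one key step is false.

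Identity~\eqref{5} is \emph{not} the hard one.  In any left Leibniz algebra one has $(xy+yx)z=0$, so both summands of
\[
\{\{a,b\},\{c,d\}\}=\{a,b\}\cdot\{c,d\}+\{c,d\}\cdot\{a,b\}
\]
vanish immediately; no initial identity is required.  Conversely, your reduction of \eqref{4} to \eqref{5} fails: the claimed relation $\{\{P,d\},c\}-\{\{P,c\},d\}=\{P,\{d,c\}\}$ (with $P=\{a,b\}$) is false.  Using $P\cdot z=0$ one finds $\{\{P,d\},c\}=c(dP)$ and $\{\{P,c\},d\}=d(cP)$, so the left side is $c(dP)-d(cP)=(cd)P$, while the right side is $0$.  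Thus \eqref{4} is equivalent to $(cd)\{a,b\}=0$, which does \emph{not} hold in the free Leibniz algebra: $c(d(ab))+c(d(ba))-d(c(ab))-d(c(ba))$ is a nonzero combination of right-normed basis monomials.  This is exactly where the initial identity enters: substituting $z=\{a,b\}$ into $x[y,z]+y[z,x]+z[x,y]=0$ and using $\{a,b\}\cdot w=0$ yields $c(d\{a,b\})=d(c\{a,b\})$, which is \eqref{4}.  Ironically this is essentially the manoeuvre you proposed for \eqref{5}, just aimed at the wrong target.

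Two smaller corrections.  Identity~\eqref{2} is not ``elementary from Leibniz'': after reducing with $(xy+yx)z=0$ one gets
\[
\{a,[b,c]\}-\{\{a,c\},b\}+\{\{a,b\},c\}=a[b,c]+b[c,a]+c[a,b],
\]
so \eqref{2} is literally the initial identity in disguise.  And in your treatment of \eqref{3}, the cyclic sum $\sum_{\mathrm{cyc}}[[a,b],c]$ does not collapse to a multiple of $\sum_{\mathrm{cyc}}\{a,\{b,c\}\}$ (the latter equals $\sum a\{b,c\}$, the fully symmetric sum, which is linearly independent from the alternating one in free Leibniz); rather it equals $a[b,c]+b[c,a]+c[a,b]$ directly, and then the initial identity finishes.
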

\begin{proof}
Identity \eqref{5} was established in \cite{Serdica}. Some of these identities follow immediately from the relation
\[
\{a,b\}c=0,
\]
which holds in every Leibniz algebra.
The remaining identities can be verified by means of computer algebra computations, for example, with the software package Albert\footnote{Albert version 4.0m6. Software.}.
\end{proof}

By Lemma~\ref{lemma1}, we obtain the following description of the polarization of 
the algebra $\di\text{-}\Lie^{\I}\langle X\rangle$.

\begin{proposition}
The polarization of the algebra $\di\text{-}\Lie^{\I}\langle X\rangle$ is given exactly
by identities~\eqref{1}, \eqref{2} and~\eqref{3}.
\end{proposition}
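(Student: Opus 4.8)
The plan is to read the statement as a redundancy-plus-completeness claim about the five identities of Lemma~\ref{lemma1}: the last two, \eqref{4} and \eqref{5}, should be consequences of the first three, and \eqref{1}, \eqref{2}, \eqref{3} should form a complete set of defining relations for the polarization. That \eqref{1}, \eqref{2}, \eqref{3} hold is immediate from Lemma~\ref{lemma1}, so the real work is to (i) derive \eqref{4} and \eqref{5} from \eqref{1}, \eqref{2}, \eqref{3} together with the degree-two relations of the polarization (skew-symmetry of $[\,,\,]$ and symmetry of $\{\,,\,\}$, which hold by construction), and then (ii) check that nothing beyond these three is needed.

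For \eqref{5} I would argue entirely in degree three. Writing $u=\{a,b\}$ and $w=\{c,d\}$, identity~\eqref{1} applied with the symmetric product in the first slot gives $[u,w]=-\{u,w\}$, while applying \eqref{1} with $u$ and $w$ exchanged and then using that $[\,,\,]$ is skew-symmetric and $\{\,,\,\}$ is symmetric gives $[u,w]=\{u,w\}$. Comparing these and using $\mathrm{char}\,K=0$ forces $\{u,w\}=0$, which is exactly \eqref{5}.

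For \eqref{4}, set $u=\{a,b\}$ again. Replacing $\{u,d\}$ by $-[u,d]$ via \eqref{1}, using the symmetry of $\{\,,\,\}$, and then applying \eqref{2} in the form $\{c,[u,d]\}=\{\{c,d\},u\}-\{\{c,u\},d\}$, one obtains
\[
\{\{u,d\},c\}=-\{\{c,d\},u\}+\{\{c,u\},d\}.
\]
The first term on the right is $\{\{c,d\},\{a,b\}\}$, which vanishes by \eqref{5}, and the second equals $\{\{u,c\},d\}$ by symmetry; this yields $\{\{u,d\},c\}=\{\{u,c\},d\}$, i.e.\ \eqref{4}. Hence both \eqref{4} and \eqref{5} are formal consequences of \eqref{1}, \eqref{2}, \eqref{3}, and \eqref{3} itself survives as a genuinely independent (Jacobi) relation.

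It remains to verify that \eqref{1}, \eqref{2}, \eqref{3} are complete, i.e.\ that the polarized algebra satisfies no further independent relation. Since the polarization is an invertible change of the generating operations, in each arity the polarized free algebra has the same dimension as $\di\text{-}\Lie^{\I}\langle X\rangle$. I would therefore show that \eqref{1}, \eqref{2}, \eqref{3} already reduce every monomial in $[\,,\,]$ and $\{\,,\,\}$ to a spanning set of normal forms (right-normed brackets, with the surviving $\{\,,\,\}$-occurrences tightly constrained by \eqref{5}), count these arity by arity, and check that the count matches the dimension of $\di\text{-}\Lie^{\I}\langle X\rangle$. I expect this completeness bookkeeping to be the main obstacle: the redundancy derivations of \eqref{4} and \eqref{5} are short, whereas verifying that the rewriting system generated by \eqref{1}, \eqref{2}, \eqref{3} is confluent and reproduces the correct Hilbert series is the delicate step, and is precisely where the computer verification behind Lemma~\ref{lemma1} would be invoked.
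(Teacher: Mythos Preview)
Your derivations of \eqref{4} and \eqref{5} from \eqref{1}--\eqref{3} are correct and are genuinely more informative than what the paper does: the paper offers no argument beyond the sentence ``By Lemma~\ref{lemma1}'' and the computer check behind that lemma, so your explicit reductions add content (and they are exactly what is used in the spanning half of Theorem~\ref{basis1}).

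Where you overshoot is in the completeness step. You frame it as building a confluent rewriting system and matching Hilbert series in all arities, but this is unnecessary. Both $\di\text{-}\Lie^{\I}$ and its polarization are presented by binary generators and degree-$3$ relations, and polarization is an invertible linear change of generators, so it suffices to compare relation spaces in arity~$3$ only. There the count is finite and easy: in the polarized signature (skew $[\,,\,]$, symmetric $\{\,,\,\}$) the multilinear space on three letters is $12$-dimensional; identity~\eqref{3} contributes one relation, and \eqref{1}, \eqref{2} contribute three each, with all seven visibly independent because they involve disjoint ``leading'' monomial types ($[[\cdot,\cdot],\cdot]$, $[\{\cdot,\cdot\},\cdot]$, $\{[\cdot,\cdot],\cdot\}$ respectively). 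Hence the variety defined by \eqref{1}--\eqref{3} has arity-$3$ component of dimension~$5$, which matches $\dim(\di\text{-}\Lie^{\I}(3))=2!+3=5$; since the polarized free algebra satisfies \eqref{1}--\eqref{3} by Lemma~\ref{lemma1}, equality follows. No higher-arity bookkeeping or confluence analysis is needed, and the ``delicate step'' you anticipate dissolves into a single linear-algebra check.
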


Let us define the set $\mathcal{C}^S_n$ as follows:
\[
\mathcal{C}^S_n=\big\{\; \{\{\cdots\{\{x_{i_1},x_{i_2}\},x_{i_3}\},\cdots\},x_{i_n}\}|\;i_1\leq i_2,\;i_3\leq\ldots\leq i_n \;\big\},
\]
Also, we set
\[
\mathcal{L}^I=\bigcup_{i=1} \mathcal{C}_i^S \cup \mathcal{L},
\]
where $\mathcal{L}$ is a basis of the free Lie algebra $\Lie\<X\>$.

\begin{theorem}\label{basis1}
The basis of the free initial Lie dialgebra is the set $\mathcal{L}^I$. In particular,
\[
\di\text{-}\Lie^{\I}\<X\>=\Lie\<X\>\;\oplus \Com^S\<X\>,
\]
as a vector space, where $\Com^S\<X\>$ is a free commutative algebra with identities \eqref{4} and \eqref{5}.
\end{theorem}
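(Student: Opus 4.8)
The plan is to establish the direct-sum decomposition
\[
\di\text{-}\Lie^{\I}\<X\>=\Lie\<X\>\oplus\Com^S\<X\>
\]
by first exhibiting $\mathcal{L}^{I}$ as a spanning set and then proving linear independence, the latter via a dimension count against the universal formula $\dim(\di\text{-}\Var(n))=n\cdot k_n$ recalled in the introduction. The two families of basis monomials reflect the polarization result stated just above: the $\dashv,\vdash$ pair decomposes into a skew-symmetric bracket $[a,b]=a\vdash b-b\dashv a$ (governing the Lie part) and a symmetric product $\{a,b\}=a\vdash b+b\dashv a$ (governing the commutative part), and Lemma~\ref{lemma1} records exactly the straightening relations each piece satisfies.

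First I would show that every element of $\di\text{-}\Lie^{\I}\<X\>$ is a linear combination of monomials from $\mathcal{L}^{I}$. Starting from an arbitrary bracketed monomial in $\dashv$ and $\vdash$, I rewrite everything in terms of $[\,,\,]$ and $\{\,,\,\}$ using polarization. Identities~\eqref{3} and the Jacobi-type relations let me reduce any pure-bracket expression to the Lie basis $\mathcal{L}$, accounting for the summand $\Lie\<X\>$. For mixed and pure-$\{\,,\,\}$ expressions, the key reductions are: relation~\eqref{1}, which converts an outer bracket applied to a symmetric product into a purely symmetric monomial (so no genuinely mixed left-normed words survive at the top); relation~\eqref{5}, $\{\{a,b\},\{c,d\}\}=0$, which kills all non-left-normed symmetric products; relation~\eqref{2}, which removes inner brackets from the second slot; and relation~\eqref{4}, which permits me to sort the tail indices $i_3,\dots,i_n$ into the nondecreasing order demanded by $\mathcal{C}^S_n$. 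The combination of~\eqref{4} and~\eqref{1}–\eqref{2} forces every surviving symmetric monomial into the left-normed shape $\{\{\cdots\{x_{i_1},x_{i_2}\},\dots\},x_{i_n}\}$ with $i_1\le i_2$ and $i_3\le\cdots\le i_n$, which is precisely $\mathcal{C}^S_n$. Thus $\mathcal{L}^{I}=\bigcup_i\mathcal{C}^S_i\cup\mathcal{L}$ spans.

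For independence I would argue by graded dimensions. On the spanning side, $|\mathcal{C}^S_n|$ counts left-normed symmetric words with the stated monotonicity constraints, and $|\mathcal{L}\cap(\text{degree }n)|=\dim\Lie(n)$ is the classical Witt number. Summing these against the known identity $\dim(\di\text{-}\Lie(n))=n\cdot\dim\Lie(n)$ would show the proposed basis has exactly the right cardinality in each multidegree, so the spanning set is forced to be a basis and the sum is direct. The map realizing the decomposition as vector spaces is the polarization isomorphism: the bracket operation lands the Lie subalgebra generated by $[\,,\,]$ onto $\Lie\<X\>$, while the symmetric operation, constrained by~\eqref{1},\eqref{4},\eqref{5}, produces the free left-symmetric-commutative piece $\Com^S\<X\>$ spanned by the $\mathcal{C}^S_n$.

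The main obstacle is the dimension bookkeeping: I must verify that the number of admissible symmetric monomials in $\mathcal{C}^S_n$ together with the Witt number sums exactly to $n\cdot\dim\Lie(n)$, rather than merely being an upper bound. Concretely, $|\mathcal{C}^S_n|$ equals the number of choices of an unordered pair $\{i_1,i_2\}$ with $i_1\le i_2$ followed by a nondecreasing tail $i_3\le\cdots\le i_n$, and matching the total to the operadic count requires care because the two families $\mathcal{C}^S_n$ and $\mathcal{L}$ interact through the defining relations rather than being a priori disjoint. Establishing that no further linear relations hold among the $\mathcal{C}^S_n$ beyond~\eqref{4} and~\eqref{5}—equivalently, that the straightening procedure terminates at a genuinely reduced normal form—is where a rewriting/Gröbner-basis argument (as suggested by the Albert computation underlying Lemma~\ref{lemma1}) does the real work, and I would lean on that confluence to conclude the counted spanning set is independent.
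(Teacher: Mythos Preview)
Your spanning argument is essentially the paper's, and it is fine. The gap is in the independence step. The formula $\dim(\di\text{-}\Var(n))=n\cdot k_n$ that you invoke applies to the \emph{full} dialgebra operad $\di\text{-}\Var$, not to its initial subvariety $\di\text{-}\Var^{\I}$. Since $\di\text{-}\Lie^{\I}$ is a proper subvariety of $\di\text{-}\Lie=\Leib$, the free algebra $\di\text{-}\Lie^{\I}\langle X\rangle$ is a proper quotient of $\Leib\langle X\rangle$, and in fact $\dim\di\text{-}\Lie^{\I}(n)=(n-1)!+\binom{n}{2}$, which is strictly smaller than $n\cdot(n-1)!=n!$ already at $n=3$ (namely $5<6$). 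Your proposed bookkeeping identity $|\mathcal{C}^S_n|+\dim\Lie(n)=n\cdot\dim\Lie(n)$ is therefore false, and no Gr\"obner/confluence argument will repair it: you are comparing against the wrong target. More fundamentally, there is no independent a priori computation of $\dim\di\text{-}\Lie^{\I}(n)$ available in the paper to count against; the dimension formula appearing in the remark is a \emph{consequence} of the theorem, not an input.

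The paper avoids any dimension count. Instead, it builds an explicit algebra $\mathcal{A}\langle X\rangle$ whose underlying vector space has $\mathcal{L}^{I}$ as a basis by fiat, defines multiplication on that basis by a list of rules, and verifies directly that the defining identities \eqref{1}--\eqref{3} of $\di\text{-}\Lie^{\I}$ hold. The universal property of the free algebra then gives a surjection $\di\text{-}\Lie^{\I}\langle X\rangle\twoheadrightarrow\mathcal{A}\langle X\rangle$; since your spanning argument shows the domain is spanned by $\mathcal{L}^{I}$ and these elements map to a linearly independent set in the target, the map is an isomorphism and $\mathcal{L}^{I}$ is a basis. If you want to salvage your approach, this model construction is exactly the missing lower bound you need.
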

\begin{proof}
First, let us show that the set $\mathcal{L}^{\I}$ spans $\di\text{-}\Lie^{\I}\langle X\rangle$.
We rewrite each product in terms of the symmetric and anti-symmetric operations as
\[
  ab = \frac{1}{2}\bigl([a,b] + \{a,b\}\bigr).
\]
The identities \eqref{1} and \eqref{2} allow us to rewrite every monomial of
$\di\text{-}\Lie^{\I}\langle X\rangle$ as a sum of two subspaces, where one involves only
pure commutator operations and the other involves only pure anti-commutator operations.
By \eqref{3}, all commutator monomials can be written as a linear combination of the
elements of $\mathcal{L}$.

The identity \eqref{5} reduces all monomials of the form
\[
  \{\{A_1,A_2\},\{A_3,A_4\}\},
\]
so that we are left only with left-normed monomials of the form
\[
  \{\{\cdots\{\{x_{i_1},x_{i_2}\},x_{i_3}\},\dots\},x_{i_n}\}.
\]
Finally, commutativity together with \eqref{4} allows us to order the generators as
$x_{i_1} \le x_{i_2}$ and $x_{i_3} \le \dots \le x_{i_n}$.

Let us consider an algebra $\mathcal{A}\langle X\rangle$ with basis $\mathcal{L}^{\I}$ and
define the multiplication on $\mathcal{A}\langle X\rangle$ as follows:
\begin{itemize}
    \item $[[L],[L]]$ is defined as in the free Lie algebra;
    
    \item $\big\{\{\{\cdots\{\{x_{i_1},x_{i_2}\},x_{i_3}\},\dots\},x_{i_n}\},
      \{\{\cdots\{\{x_{j_1},x_{j_2}\},x_{j_3}\},\dots\},x_{j_m}\}\big\}=0$;
    
    \item $\big\{\{\{\cdots\{\{x_{i_1},x_{i_2}\},x_{i_3}\},\dots\},x_{i_n}\},x_t\big\}
      =
      \{\{\cdots\{\{x_{i_1},x_{i_2}\},x_{i_3}\},\ldots\},x_t\},\dots\},x_{i_n}\}$;
    
    \item $\big[\{\{\cdots\{\{x_{i_1},x_{i_2}\},x_{i_3}\},\dots\},x_{i_n}\},
      \{\{\cdots\{\{x_{j_1},x_{j_2}\},x_{j_3}\},\dots\},x_{j_m}\}\big]=0$;
    
    \item $\big[\{\{\cdots\{\{x_{i_1},x_{i_2}\},x_{i_3}\},\dots\},x_{i_n}\},x_t\big]
      =
      -\{\{\cdots\{\{x_{i_1},x_{i_2}\},x_{i_3}\},\ldots\},x_t\},\dots\},x_{i_n}\}$;
      
    \item $\big\{\{\{\cdots\{\{x_{i_1},x_{i_2}\},x_{i_3}\},\dots\},x_{i_n}\},[L]\big\}=0$;
    
    \item $\big[\{\{\cdots\{\{x_{i_1},x_{i_2}\},x_{i_3}\},\dots\},x_{i_n}\},[L]\big]=0$;
    
    \item $\big\{[L],[L]\big\} =
      \big\{
        [[\cdots[[x_{i_1},x_{i_2}],x_{i_3}],\dots],x_{i_n}],
        [[\cdots[[x_{j_1},x_{j_2}],x_{j_3}],\dots],x_{j_m}]
      \big\}\\
     =\{\{\cdots\{\{x_{i_1},x_{j_1}\},x_{k_3}\},\dots\},x_{k_{m+n}}\}
      +\{\{\cdots\{\{x_{i_2},x_{j_2}\},x_{l_3}\},\dots\},x_{l_{m+n}}\}\\
      -\{\{\cdots\{\{x_{i_1},x_{j_2}\},x_{s_3}\},\dots\},x_{s_{m+n}}\}
      -\{\{\cdots\{\{x_{i_2},x_{j_1}\},x_{r_3}\},\dots\},x_{r_{m+n}}\}$,
\end{itemize}
where $[L]$ is a left-normed pure Lie monomial and
\begin{align*}
\{k_3,\ldots,k_{m+n}\}= \ &\{i_2,\ldots,i_n,j_2,\ldots,j_m\},\\
\{l_3,\ldots,l_{m+n}\}= \ &\{i_1,i_3,\ldots,i_n,j_1,j_3,\ldots,j_m\},\\
\{s_3,\ldots,s_{m+n}\}= \ &\{i_2,\ldots,i_n,j_1,j_3,\ldots,j_m\},\\
\{r_3,\ldots,r_{m+n}\}= \ &\{i_1,i_3,\ldots,i_n,j_2,\ldots,j_m\},
\end{align*}
with the indices ordered so that
\[
k_3\leq\cdots\leq k_{m+n},\quad
l_3\leq\cdots\leq l_{m+n},\quad
s_3\leq\cdots\leq s_{m+n},\quad
r_3\leq\cdots\leq r_{m+n}.
\]
By a straightforward computation, one verifies that the algebra $\mathcal{A}\langle X\rangle$
belongs to $\di$-$\Lie^{\I}$, i.e., satisfies the identities ~\eqref{1}, \eqref{2} and~\eqref{3}. It remains to observe that
\[
\mathcal{A}\langle X\rangle \cong \di\text{-}\Lie^{\I}\langle X\rangle,
\]
which completes the proof.
\end{proof}

\begin{corollary}
From Theorem \ref{basis1}, we obtain
\[
\dim(\di\text{-}\Lie^{\I}(n))=(n-1)!+\frac{n(n-1)}{2}.
\]
In particular, we have
\begin{center}
\begin{tabular}{c|ccccccc}
 $n$ & 1 & 2 & 3 & 4 & 5 & 6 & 7 \\
 \hline $\dim$$(\di$-$\Lie^{\I}$$(n))$ & 1 & 2 & 5 & 12 & 34 & 135 & 741
\end{tabular}
\end{center}
\end{corollary}

Now, we study a free initial associative dialgebra $\di\text{-}\As^{\I}\<X\>$. 
\begin{lemma}
An algebra $\di\text{-}\As^{\I}\<X\>$ satisfies the following identity:
\begin{equation}\label{idAsI}
((a\vdash b)\dashv c)\dashv d=((a\dashv b)\dashv c)\dashv d.
\end{equation}
\end{lemma}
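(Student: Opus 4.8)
The plan is to establish~\eqref{idAsI} by a single chain of rewrites, using the three defining identities of an associative dialgebra (associativity of $\dashv$, associativity of $\vdash$, and the mixed rule $(a\vdash b)\dashv c=a\vdash(b\dashv c)$), the two $0$-identities, and---most importantly---the initial identity~\eqref{as1} applied twice, once in each direction. The organizing idea is that both sides of~\eqref{idAsI} can be transported to the single all-$\vdash$ monomial $((a\vdash b)\vdash c)\vdash d$, which serves as a bridge between the two mixed expressions.

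First I would move the left-hand side $((a\vdash b)\dashv c)\dashv d$ toward the bridge. Associativity of $\dashv$ collapses it to $(a\vdash b)\dashv(c\dashv d)$. Reading the initial identity~\eqref{as1} from right to left, i.e.\ in the form $u\dashv(v\dashv w)=(u\vdash v)\vdash w$, and substituting $u=a\vdash b$, $v=c$, $w=d$, turns this into $((a\vdash b)\vdash c)\vdash d$. This is the first of the two uses of~\eqref{as1}, and note that it is applied with the composite left argument $u=a\vdash b$.

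Next I would descend from the bridge to the right-hand side. Associativity of $\vdash$ rewrites $((a\vdash b)\vdash c)\vdash d$ as $(a\vdash b)\vdash(c\vdash d)$; now applying~\eqref{as1} forward with the composite right argument $w=c\vdash d$ produces $a\dashv(b\dashv(c\vdash d))$. At this point the inner factor is $c\vdash d$ rather than $c\dashv d$, and the relevant $0$-identity $b\dashv(c\vdash d)=b\dashv(c\dashv d)$ repairs the mismatch, yielding $a\dashv(b\dashv(c\dashv d))$. A final use of associativity of $\dashv$ rebrackets this as $((a\dashv b)\dashv c)\dashv d$, which is exactly the right-hand side of~\eqref{idAsI}.

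The main obstacle is conceptual rather than computational: one has to recognize that the identity is not reachable by rewrites that preserve a fixed bracketing shape, but only by detouring through the all-$\vdash$ monomial and invoking~\eqref{as1} twice with \emph{composite} arguments (the left factor $a\vdash b$ in one direction, the right factor $c\vdash d$ in the other). The $0$-identity is indispensable for chaining the two applications, since the two invocations of~\eqref{as1} naturally leave an inner $\vdash$ exactly where a $\dashv$ is needed. Everything else is routine reassociation. Conceptually, this reflects the fact that at arity four the initial identity already forces all four right-normed normal forms on a fixed ordering of $a,b,c,d$ to coincide, of which~\eqref{idAsI} is one instance.
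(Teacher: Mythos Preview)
Your proof is correct and is essentially the same argument as the paper's: both apply the initial identity~\eqref{as1} twice (together with the diassociative axioms and a $0$-identity) to funnel each side of~\eqref{idAsI} through the all-$\vdash$ monomial on $a,b,c,d$. The only cosmetic difference is that you use the left-normed bridge $((a\vdash b)\vdash c)\vdash d$ and present one linear chain, whereas the paper uses the right-normed bridge $a\vdash(b\vdash(c\vdash d))$ and presents two parallel computations meeting at that bridge; by associativity of $\vdash$ these bridges coincide, so the proofs are interchangeable.
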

\begin{proof}
It suffices to consider the composition of the identity
\[
(a\vdash b)\vdash c \;=\; a\dashv(b\dashv c)
\]
with itself. On the one hand, we obtain
\[
\begin{aligned}
a\vdash\bigl(b\vdash(c\vdash d)\bigr)
&=(a\dashv b)\dashv(c\vdash d)
=(a\dashv b)\dashv(c\dashv d)
=\bigl((a\dashv b)\dashv c\bigr)\dashv d.
\end{aligned}
\]
On the other hand,
\[
\begin{aligned}
a\vdash\bigl(b\vdash(c\vdash d)\bigr)
&=a\vdash\bigl(b\dashv(c\dashv d)\bigr)
=(a\vdash b)\dashv(c\dashv d)
=\bigl((a\vdash b)\dashv c\bigr)\dashv d,
\end{aligned}
\]
and the proof is completed.
\end{proof}

A linear basis of the algebra $\di\text{-}\As\<X\>$ is described in \cite{Loday} and consists of the monomials
\[
x_{i_1}\vdash\cdots\vdash x_{i_{k-1}}\vdash x_{i_k}\dashv x_{i_{k+1}}\dashv\cdots\dashv x_{i_n},
\]
where $k$ is any integer with $1\le k\le n$. In particular, this yields
\[
\dim\bigl(\di\text{-}\As(n)\bigr)=n\cdot n!.
\]

Since the operad derived from the associative dialgebras is nonsymmetric, the operad derived from the free initial associative dialgebra is nonsymmetric as well. Therefore, it suffices to determine a monomial basis of the free algebra $\di\text{-}\As^{\I}\<X\>$ on one generator.

Let us define the set $\mathcal{A}_i$ as follows:
\[
\mathcal{A}_1=\{x\},\; \mathcal{A}_2=\{x\vdash x,\; x\dashv x\},\;\mathcal{A}_3=\{(x\dashv x)\dashv x,\; (x\vdash x)\dashv x\},
\]
\[
\mathcal{A}_n=\underset{n}{\underbrace{x\dashv x \dashv \cdots \dashv x }},
\]
where $n\geq 4$. Also, we set
\[
\mathcal{A}^I=\bigcup_i \mathcal{A}_i.
\]

\begin{theorem}\label{basisDiAsI}
The set $\mathcal{A}^I$ is the basis of the algebra $\di\text{-}\As^{\I}\<x\>$.
\end{theorem}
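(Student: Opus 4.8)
The plan is to exploit the nonsymmetry reduction already recorded above: it suffices to exhibit a monomial basis of the one‑generator algebra $\di\text{-}\As^{\I}\<x\>$, which I realize as the quotient of Loday's free diassociative algebra $\di\text{-}\As\<x\>$ by the ideal generated by the initial identity~\eqref{as1} together with its consequences under the dialgebra operations. Following Loday's basis, write $m_n^{(k)}$ for the degree‑$n$ monomial on $x$ whose distinguished (central) letter sits in position $k$, i.e. $m_n^{(k)}=\underbrace{x\vdash\cdots\vdash x}_{k-1}\vdash x\dashv\underbrace{x\dashv\cdots\dashv x}_{n-k}$, so that $\{m_n^{(1)},\dots,m_n^{(n)}\}$ is a basis of $\di\text{-}\As\<x\>$ in degree $n$. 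The composition rules there read $m_n^{(k)}\dashv x=m_{n+1}^{(k)}$, $m_n^{(k)}\vdash x=m_{n+1}^{(n+1)}$, $x\dashv m_n^{(k)}=m_{n+1}^{(1)}$ and $x\vdash m_n^{(k)}=m_{n+1}^{(k+1)}$. I would first prove that $\mathcal{A}^{\I}$ spans $\di\text{-}\As^{\I}\<x\>$, and then that it is linearly independent by constructing an explicit model, exactly as in the proof of Theorem~\ref{basis1}.

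For the spanning part, on $x$ the identity~\eqref{as1} reads $m_3^{(3)}=m_3^{(1)}$, which is the only new relation in degrees $\le 3$; hence $\mathcal{A}_1$, $\mathcal{A}_2$ and $\mathcal{A}_3=\{m_3^{(1)},m_3^{(2)}\}$ are spanning in those degrees. In degree $4$ I would close this relation under the dialgebra operations: applying $(\,\cdot\,)\dashv x$ gives $m_4^{(3)}=m_4^{(1)}$, applying $x\vdash(\,\cdot\,)$ gives $m_4^{(4)}=m_4^{(2)}$, while the preceding Lemma (identity~\eqref{idAsI}) supplies precisely the missing relation $m_4^{(2)}=m_4^{(1)}$. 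Together these collapse all four monomials onto $e_4:=x\dashv x\dashv x\dashv x$, so $\mathcal{A}_4$ spans degree $4$. The general case follows by induction on $n\ge 4$: assuming every degree‑$n$ monomial equals $m_n^{(1)}$, the rule $m_n^{(k)}\dashv x=m_{n+1}^{(k)}$ reduces $m_{n+1}^{(k)}$ to $m_{n+1}^{(1)}$ for $k\le n$, and the top monomial is handled by $m_{n+1}^{(n+1)}=x\vdash m_n^{(n)}=x\vdash m_n^{(1)}=m_{n+1}^{(2)}=m_{n+1}^{(1)}$. Thus every degree‑$n$ element reduces to $x\dashv\cdots\dashv x$ for $n\ge 4$, and $\mathcal{A}^{\I}$ spans.

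For linear independence I would build an algebra $\mathcal{A}$ whose underlying graded space has $\mathcal{A}^{\I}$ as a basis (dimensions $1,2,2,1,1,\dots$) and define $\vdash,\dashv$ on it by transporting the composition rules of $\di\text{-}\As\<x\>$ through the collapse: the two‑dimensional pieces in degrees $2$ and $3$ retain their diassociative products (one checks these land in the spans $\{m_2^{(1)},m_2^{(2)}\}$ and $\{m_3^{(1)},m_3^{(2)}\}$), while every product landing in degree $\ge 4$ is sent to the unique element $e_n=x\dashv\cdots\dashv x$ of that degree. One then verifies that $\mathcal{A}$ satisfies the two $0$‑identities, the three associativity identities and~\eqref{as1}, so that $\mathcal{A}\in\di\text{-}\As^{\I}$. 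The canonical epimorphism $\di\text{-}\As^{\I}\<x\>\twoheadrightarrow\mathcal{A}$ sends the spanning set $\mathcal{A}^{\I}$ bijectively onto the chosen basis of $\mathcal{A}$; combined with the spanning statement this forces the map to be an isomorphism and $\mathcal{A}^{\I}$ to be a basis, yielding the dimensions $1,2,2,1,1,\dots$.

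The routine part is the spanning reduction, which is immediate once the preceding Lemma is in hand; indeed the role of~\eqref{idAsI} is exactly to close the single degree‑$4$ gap that applying the operations to~\eqref{as1} does not reach. I expect the main obstacle to be the verification that the model $\mathcal{A}$ is well defined and lies in $\di\text{-}\As^{\I}$: one must confirm that the prescribed multiplication, in particular where the two‑dimensional degree‑$2$ and degree‑$3$ components multiply one another and where the center position is forced to collapse, is associative in the diassociative sense and is compatible with~\eqref{as1} for every combination of arguments. This is a finite but delicate case analysis, entirely analogous to the verification carried out for $\mathcal{A}\<X\>$ in the proof of Theorem~\ref{basis1}.
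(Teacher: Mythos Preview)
Your proposal is correct and follows essentially the same two-step strategy as the paper: first reduce every Loday monomial to the claimed normal form using~\eqref{as1} and its consequence~\eqref{idAsI}, then establish independence by building a model algebra on $\mathcal{A}^{\I}$ and checking that it satisfies the defining identities. Your use of the notation $m_n^{(k)}$ and the explicit induction on $n\ge 4$ make the spanning argument slightly more transparent than in the paper, but the underlying ideas and the role of~\eqref{idAsI} in closing the degree-$4$ gap are identical.
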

\begin{proof}
First, we prove that the set $\mathcal{A}^{I}$ spans the algebra $\di\text{-}\As^{\I}\<x\>$. In degrees $\le 3$ this is immediate from the defining identities of $\di\text{-}\As^{\I}\<x\>$. Starting from degree $4$, we use the standard monomial basis of $\di\text{-}\As\<x\>$. Every monomial in $\di\text{-}\As^{\I}\<x\>$ can be rewritten in the form
\[
x\vdash\cdots\vdash x\vdash x\dashv x\dashv\cdots\dashv x .
\]
By \eqref{idAsI}, we have
\[
x\vdash x\dashv x\dashv x \;=\; x\dashv x\dashv x\dashv x .
\]
This identity allows one to move the symbol $\vdash$ one position to the left across a block of $\dashv$'s. Consequently,
\begin{multline*}
x\vdash\cdots\vdash x\vdash x\dashv x\dashv\cdots\dashv x
= x\vdash\cdots\vdash x\dashv x\dashv x\dashv\cdots\dashv x \\
= x\vdash\cdots\dashv x\dashv x\dashv x\dashv\cdots\dashv x
= \cdots
= x\dashv\cdots\dashv x\dashv x\dashv x\dashv\cdots\dashv x.
\end{multline*}

For monomials of the form
\[
x\vdash\cdots\vdash x\vdash x\vdash x\vdash x,\;x\vdash\cdots\vdash x\vdash x\vdash x\dashv x,
\]
it is enough to use the defining identities of $\di$-$\As^{\I}\<X\>$ and the previous rule.

Let us consider an algebra $\mathcal{A}\<x\>$ with a basis $\mathcal{A}^I$. Up to degree $3$, we define multiplication in $\mathcal{A}\<x\>$ that is consistent with the defining identities of $\di\text{-}\As^{\I}\<x\>$. Starting from degree $4$, the multiplication is defined as follows:
\begin{itemize}
    \item $(x\dashv\cdots\dashv x\dashv x)\vdash(x\dashv\cdots\dashv x\dashv x)=x\dashv\cdots\dashv x\dashv x$;
    \item $(x\dashv\cdots\dashv x\dashv x)\dashv(x\dashv\cdots\dashv x\dashv x)=x\dashv\cdots\dashv x\dashv x$.
\end{itemize}
By a straightforward computation, one verifies that the algebra $\mathcal{A}\langle x\rangle$
belongs to $\di$-$\As^{\I}$. It remains to note that
\[
\mathcal{A}\langle x\rangle \cong \di\text{-}\As^{\I}\langle x\rangle,
\]
which completes the proof.
\end{proof}

Since the operad $\di$-$\As^{\I}$ is nonsymmetric, to obtain a basis of $\di$-$\As^{\I}\<X\>$ algebra, we place all possible permutations of the alphabet $X$ in monomials of the set $\mathcal{A}^{I}$.

From the Theorem \ref{basisDiAsI}, we obtain
\[
\dim(\di\text{-}\As^{\I}(1))=1,\; \dim(\di\text{-}\As^{\I}(2))=4,\;
\dim(\di\text{-}\As^{\I}(3))=12,
\]
and starting from degree $4$, we have
\[
\dim(\di\text{-}\As^{\I}(n))=n!.
\]

\begin{remark}
There is no need to describe a basis of the free initial associative-commutative dialgebra, since it is a two-sided perm algebra. It is well known that a free two-sided perm algebra coincides with the free associative-commutative algebra in all homogeneous components of
degree at least~$3$.
\end{remark}

At first sight, one may expect that, from some degree on, the free algebra $\di\text{-}\Var^{\I}\langle X\rangle$ behaves like $\Var\langle X\rangle$, as happens for $\Var=\Com$ and $\Var=\As$. For $\Var=\Com$ and $\Var=\As$, it starts from degrees $3$ and $4$, respectively.
However, for $\Var=\Lie$, we obtain a completely different picture. 
Moreover, for $\Var=\LS$, the description of the free algebra $\di\text{-}\LS^{\I}\langle X\rangle$ becomes a nontrivial problem, where $\LS$ denotes the variety of left-symmetric algebras. By using the computer algebra as \cite{DotsHij}, we obtain
\begin{center}
\begin{tabular}{c|cccccc}
 $n$ & 1 & 2 & 3 & 4 & 5 \\
 \hline $\dim(\di\text{-}\LS^{\I}(n))$ & 1 & 4 &  24 & 176 & 1620
\end{tabular}
\end{center}
As the above examples indicate, there is no general relation between the dimensions of the free algebras $\Var\langle X\rangle$ and $\di\text{-}\Var^{\I}\langle X\rangle$.

\section{Some Koszul dual operads of operads $\di$-$\Var^{\I}$}

In this section, we compute the Koszul dual operads of $\di$-$\Com^{\I}$, $\di$-$\Lie^{\I}$, and $\di$-$\As^{\I}$. The motivation for this is the following diagram:
\begin{center}
\setlength{\unitlength}{1pt}
\begin{picture}(400,90)
  \put(115,75){$\di\text{-}\Var$}
  \put(240,75){$\pre\text{-}\Var$}
  \put(125,57){$\cup$}
  \put(250,57){$\cap$}
  \put(115,40){$\di\text{-}\Var^{\I}$}
  \put(240,40){$\di\text{-}\Var^{\I\,(!)}$}
  \put(175,43){\vector(1,0){55}}
  \put(210,43){\vector(-1,0){55}}
  \put(175,79){\vector(1,0){55}}
  \put(210,79){\vector(-1,0){55}}
\end{picture}
\end{center}
\vspace*{-\baselineskip}
\vspace*{-\baselineskip}

To compute the Koszul dual operad of $\di\text{-}\Var^{\I}$, we use the Lie-admissibility condition for $S\otimes U$, where $S$ is an algebra from $\di\text{-}\Var^{\I}$. If $\Var=\Com$, then we have
\begin{align*}
[[a\otimes u,b\otimes v],c\otimes w]= \ &(ab)c\otimes (uv)w-(ba)c\otimes (vu)w-
c(ab)\otimes w(uv)+c(ba)\otimes w(vu)\\
= \ &abc\otimes (uv)w-abc\otimes (vu)w-
abc\otimes w(uv)+abc\otimes w(vu),\\
[[b\otimes v,c\otimes w],a\otimes u]= \ &(bc)a\otimes (vw)u-(cb)a\otimes (wv)u-
a(bc)\otimes u(vw)+a(cb)\otimes u(wv)\\
= \ & abc\otimes (vw)u-abc\otimes (wv)u-
abc\otimes u(vw)+abc\otimes u(wv),\\
[[c\otimes w,a\otimes u],b\otimes v]= \ &(ca)b\otimes (wu)v-(ac)b\otimes (uw)v-
b(ca)\otimes v(wu)+b(ac)\otimes v(uw)\\
= \ &abc\otimes (wu)v-abc\otimes (uw)v-
abc\otimes v(wu)+abc\otimes v(uw).
\end{align*}
The sum of the same elements on the left side of the tensors yields the following result: 
\begin{proposition}
An operad $\di\text{-}\Com^{\I\;(!)}$ is the Lie-admissible operad. 
\end{proposition}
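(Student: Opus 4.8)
The plan is to exploit the defining feature of the Koszul dual recalled at the start of the section: an algebra $U$ belongs to $\di\text{-}\Com^{\I\,(!)}$ exactly when, for every $S\in\di\text{-}\Com^{\I}$, the space $S\otimes U$ with bracket
\[
[s\otimes u,\,s'\otimes u']=ss'\otimes uu'-s's\otimes u'u
\]
is a Lie algebra. Since this bracket is visibly skew-symmetric (swapping the two arguments negates it), the entire content lies in the Jacobi identity, and the three displays preceding the statement are precisely its expansion.

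First I would invoke the Example identifying $\di\text{-}\Com=\Perm$ to note that $S$ is a two-sided perm algebra, so that every associativity/commutativity reordering of a triple product of $a,b,c$ collapses to one element $abc$; this is exactly the step that rewrites each cyclic bracket as $abc$ tensored with a cubic word in $u,v,w$. Next I would add the three displays. The common left-hand factor $abc$ pulls out, and the twelve right-hand terms regroup into
\[
\sum_{\mathrm{cyc}}[[a\otimes u,b\otimes v],c\otimes w]
= abc\otimes\bigl([[u,v],w]+[[v,w],u]+[[w,u],v]\bigr),
\]
where $[u,v]=uv-vu$; the bracketed factor is nothing but the Jacobiator of the commutator on $U$.

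To conclude, I would choose $S$ to be the free two-sided perm algebra, where (by the remark that in degrees $\ge 3$ it agrees with the free commutative associative algebra) the element $abc$ is a nonzero basis vector. Cancelling it, the requirement that $S\otimes U$ satisfy Jacobi for all $S$ becomes the single identity $[[u,v],w]+[[v,w],u]+[[w,u],v]=0$ on $U$, i.e.\ the condition that the commutator of $U$ be a Lie bracket. This is the definition of a Lie-admissible algebra, and since skew-symmetry is automatic there are no further constraints; hence $\di\text{-}\Com^{\I\,(!)}$ is precisely the Lie-admissible operad.

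The hard part will be the cancellation: one must argue that the tensor relation $abc\otimes\bigl([[u,v],w]+[[v,w],u]+[[w,u],v]\bigr)=0$ truly forces the pure relation on $U$, which relies on $abc$ being nonzero in the free object, and one must check that no relation beyond this single Jacobiator survives, so that the dual operad is exactly Lie-admissible rather than a proper suboperad of it.
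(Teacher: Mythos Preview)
Your proposal is correct and follows essentially the same approach as the paper: both expand the Jacobi identity for $S\otimes U$, use the two-sided perm structure on $S$ to collapse every triple product to $abc$, and read off the Jacobiator of the commutator on $U$ as the sole resulting relation. You are in fact more explicit than the paper about the cancellation step (choosing $S$ free so that $abc\neq 0$), which the paper leaves implicit in the phrase ``the sum of the same elements on the left side of the tensors yields the following result.''
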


If $\Var=\Lie$, then we have
\begin{align*}
[[a\otimes u,b\otimes v],c\otimes w]= \ & (ab)c\otimes (uv)w-(ba)c\otimes (vu)w-
c(ab)\otimes w(uv)+c(ba)\otimes w(vu)\\
= \ & (a(cb)-b(ca)-c(ab)+c(ba))\otimes (uv)w\\
- \ & (-a(cb)+b(ca)+c(ab)-c(ba))\otimes (vu)w \\
- \ & c(ab)\otimes w(uv)+c(ba)\otimes w(vu)\\
[[b\otimes v,c\otimes w],a\otimes u]= \ & (bc)a\otimes (vw)u-(cb)a\otimes (wv)u-
a(bc)\otimes u(vw)+a(cb)\otimes u(wv)\\
= \ & (b(ca)-c(ba))\otimes (vw)u-(c(ba)-b(ca))\otimes (wv)u\\
- \ & (a(cb)+b(ac)-b(ca)-c(ab)+c(ba))\otimes u(vw)+a(cb)\otimes u(wv)\\
[[c\otimes w,a\otimes u],b\otimes v]= \ & (ca)b\otimes (wu)v-(ac)b\otimes (uw)v-b(ca)\otimes v(wu)+b(ac)\otimes v(uw)\\
= \ & (c(ab)-a(cb))\otimes (wu)v-(a(cb)-c(ab))\otimes (uw)v\\
- \ & b(ca)\otimes v(wu)+b(ac)\otimes v(uw).
\end{align*}
The sum of the same elements on the left side of the tensors yields the following result: 
\begin{proposition}
An operad $\di\text{-}\Lie^{\I\;(!)}$ is the left-commutative operad with the following identity:
\begin{equation}\label{Id52}
(uv)w+(vu)w-u(vw)+u(wv)-(wu)v-(uw)v=0.
\end{equation}
\end{proposition}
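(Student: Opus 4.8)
The plan is to identify $\di\text{-}\Lie^{\I\;(!)}$ by the same Lie-admissibility criterion already applied in the $\Com$ case: a one-operation algebra $U$ (product written by juxtaposition) lies in $\di\text{-}\Lie^{\I\;(!)}$ exactly when, for every $S\in\di\text{-}\Lie^{\I}$, the tensor product $S\otimes U$ with product $(a\otimes u)(b\otimes v)=ab\otimes uv$ is Lie-admissible, i.e. the commutator $[a\otimes u,b\otimes v]=ab\otimes uv-ba\otimes vu$ satisfies the Jacobi identity. Since $\di\text{-}\Lie=\Leib$ collapses to a single bracket, I treat $S$ as carrying one product and read off the constraints this forces on $U$; by maximality of the dual these constraints are precisely the defining relations of $\di\text{-}\Lie^{\I\;(!)}$.

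First I would write out the full Jacobiator $[[a\otimes u,b\otimes v],c\otimes w]+[[b\otimes v,c\otimes w],a\otimes u]+[[c\otimes w,a\otimes u],b\otimes v]$, which is the sum of the three displays above. Each summand is a combination of decomposable tensors whose left factors are the left-nested $S$-products $(ab)c,(ba)c,c(ab),c(ba)$ and their cyclic analogues. The next step is to normalize every left factor into right-nested form using the Leibniz identity together with the initial identity of $\di\text{-}\Lie^{\I}$, via the rewriting rule already applied above,
\[
(xy)z \;=\; x(zy)-y(zx)-z(xy)+z(yx).
\]

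After this reduction the Jacobiator is a linear combination of tensors of the form $\{\text{right-nested }S\text{-monomial}\}\otimes\{U\text{-element}\}$. The key move — what the text calls summing ``the same elements on the left side of the tensors'' — is to pool, for each $S$-monomial, all the $U$-parts attached to it. Because the reduced monomials span the degree-$3$ component of the free $\di\text{-}\Lie^{\I}$-algebra and the Jacobi identity must hold for the free $S$, the $U$-coefficient of every basis monomial must vanish separately; here one must remember that these monomials already satisfy one relation (the initial identity), so that $\di\text{-}\Lie^{\I}(3)=5$ rather than $6$. Collecting the coefficients yields a system of multilinear degree-$3$ identities on $U$, which I would then simplify by elimination and check to be equivalent to left-commutativity together with the single relation~\eqref{Id52}.

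The hard part will be this third step: carrying out the reduction to the right-nested form without sign errors, correctly pooling the $U$-coefficients over the independent $S$-monomials, and verifying that the resulting linear system collapses to exactly left-commutativity and~\eqref{Id52} — neither losing a relation nor introducing a spurious one. As a final consistency check establishing completeness (so that $\di\text{-}\Lie^{\I\;(!)}$ is this operad on the nose, not a proper quotient), I would compare dimensions in arity $3$: with a single binary generator the free operad has $\dim\mathcal{F}(3)=12$, so Koszul duality predicts $\dim\di\text{-}\Lie^{\I\;(!)}(3)=12-\di\text{-}\Lie^{\I}(3)=7$, and the left-commutativity relations together with~\eqref{Id52} should cut out precisely this $7$-dimensional space.
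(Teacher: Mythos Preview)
Your approach is essentially the same as the paper's: both use the Lie-admissibility criterion for $S\otimes U$ with $S\in\di\text{-}\Lie^{\I}$, expand the Jacobiator, rewrite the left-nested $S$-monomials into right-nested form via the combined Leibniz and initial identities (your rewriting rule $(xy)z=x(zy)-y(zx)-z(xy)+z(yx)$ is exactly what the paper applies in its three displayed computations), and then read off the relations on $U$ by collecting coefficients of the independent $S$-monomials. The paper carries out the rewriting explicitly but then simply asserts the result, whereas you leave the rewriting as a plan but add a useful arity-$3$ dimension check ($12-5=7$) to confirm that left-commutativity together with~\eqref{Id52} exhausts the relations; this last verification is absent from the paper and is a genuine addition on your part.
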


As we have seen, the algebras $\di\text{-}\Com^{\I\,(!)}\langle X\rangle$ and $\di\text{-}\Com^{(!)}\langle X\rangle$ yield Lie algebras when equipped with the commutator bracket. 
Let us now verify the following statement.

\begin{proposition}
The algebra $\di\text{-}\Lie^{\I\,(!)}\langle X\rangle$ as a Zinbiel algebra equipped with the anti-commutator product is associative-commutative.
\end{proposition}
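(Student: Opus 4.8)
The plan is to verify the two defining properties of an associative-commutative algebra directly, mirroring the classical fact that a Zinbiel algebra becomes associative and commutative under its symmetrized product. Write the bilinear product of $\di\text{-}\Lie^{\I\,(!)}\langle X\rangle$ by juxtaposition and denote the anti-commutator by $\{a,b\}=ab+ba$. Commutativity is immediate, since $\{a,b\}=ab+ba=ba+ab=\{b,a\}$, so the whole problem reduces to associativity of $\{\,,\}$.

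First I would expand both sides of the associativity relation in terms of the underlying product. Explicitly,
\[
\{\{a,b\},c\}=(ab)c+(ba)c+c(ab)+c(ba),\qquad
\{a,\{b,c\}\}=a(bc)+a(cb)+(bc)a+(cb)a,
\]
so that the task is to show that the associator
\[
\{\{a,b\},c\}-\{a,\{b,c\}\}=(ab)c+(ba)c+c(ab)+c(ba)-a(bc)-a(cb)-(bc)a-(cb)a
\]
vanishes modulo the defining identities of $\di\text{-}\Lie^{\I\,(!)}$, namely left-commutativity together with identity~\eqref{Id52}. This is exactly the pattern of the preceding verification that $\di\text{-}\Com^{\I\,(!)}$ is Lie-admissible, where the cyclic sum of double brackets was shown to cancel; here the role played by the Jacobi sum is taken by the eight-term associator above.

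The core of the argument, and the step I expect to be the main obstacle, is the reduction of this eight-term expression to zero. I would substitute all the relevant permutations of $a,b,c$ for $u,v,w$ in identity~\eqref{Id52} and combine these instances with left-commutativity so as to reproduce the associator. A computation in the free Zinbiel algebra (with the half-shuffle product) confirms that the target relation does hold once the full Zinbiel identity is available; the real content here is that the weaker identities defining the generalization $\di\text{-}\Lie^{\I\,(!)}$ already suffice, since this variety is only a generalization of Zinbiel. Concretely, I would first use left-commutativity to merge the monomials that differ by the allowed transposition, then apply \eqref{Id52} in the orderings needed to trade the left-nested terms $(xy)z$ for the right-nested terms $x(yz)$, and finally check that the cancellation is complete. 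Once the associator is shown to vanish, associativity and commutativity together give that $(\di\text{-}\Lie^{\I\,(!)}\langle X\rangle,\{\,,\})$ is associative-commutative; equivalently, the anti-commutator defines an operad morphism $\Com\to\di\text{-}\Lie^{\I\,(!)}$ sending the commutative product to $ab+ba$.
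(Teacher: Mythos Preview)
Your approach is correct and is exactly what the paper does: expand $\{\{a,b\},c\}-\{a,\{b,c\}\}$ and kill it with left-commutativity together with identity~\eqref{Id52}. The cancellation is shorter than you fear---after using left-commutativity to rewrite $c(ab)=a(cb)$ and $c(ba)=b(ca)$ in $\{\{a,b\},c\}$, the term $a(cb)$ cancels against its counterpart in $\{a,\{b,c\}\}$, and the remaining six-term relation $(ab)c+(ba)c+b(ca)=a(bc)+(bc)a+(cb)a$ is a single instance of~\eqref{Id52} with $(u,v,w)=(b,c,a)$ once you use left-commutativity one more time to replace $b(ac)$ by $a(bc)$; no summation over all permutations is required.
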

\begin{proof}
By definition,
\[
\{\{a,b\},c\}=(ab)c+(ba)c+c(ab)+c(ba),
\]
and
\[
\{a,\{b,c\}\}=a(bc)+a(cb)+(bc)a+(cb)a.
\]
Using
\[
c(ab)=a(cb),\qquad c(ba)=b(ca),
\]
we obtain
\[
\{\{a,b\},c\}=(ab)c+(ba)c+a(cb)+b(ca).
\]
Therefore, the desired equality $\{\{a,b\},c\}=\{a,\{b,c\}\}$ is equivalent to
\[
(ab)c+(ba)c+b(ca)=a(bc)+(bc)a+(cb)a.
\]
By \eqref{Id52}, we obtain the result.
\end{proof}

As we noted before, not every associative-commutative algebra can be embedded into some algebra from the variety of Zinbiel algebras. However, the variety $\di\text{-}\Lie^{\I\;(!)}$ is a generalization of the variety of Zinbiel algebras, and it would be interesting to verify if it is possible to embed any associative-commutative algebra into some algebra from $\di\text{-}\Lie^{\I\;(!)}$.

From all verified observations, we can notice that in algebra $\di\text{-}\Var^{\I\;(!)}$ there can be defined an operation $\star$ to obtain an algebra from $\Var^{(!)}$.
\\

\noindent \textbf{Open problems:}

\begin{enumerate}

\item Construct the basis of the algebra $\di\text{-}\LS^{\I}\langle X\rangle$.

\item Find a universal algorithm that allows us to construct the basis of $\di\text{-}\Var^{\I}\langle X\rangle$ starting from the known basis of $\Var\<X\>$.

\item Prove that
\[
\Var\hookrightarrow \di\text{-}\Var^{\I},
\]
i.e., is it possible to embed any algebra from $\Var$ into an appropriate algebra from $\di\text{-}\Var^{\I}$ under the operation $\star$?

\item Using the defining identities of $\pre$-$\Var$, find a universal algorithm for constructing $\di\text{-}\Var^{\I\;(!)}$.

\item If an operad $\Var$ is Koszul, does it follow that the operad $\di\text{-}\Var^{\I\,(!)}$ is also Koszul? In the classical case, from the Koszulness of $\Var$, it follows that $\di$-$\Var$ is also a Koszul operad. 

\item Find a criterion that identifies the elements of $\di\text{-}\Var^{\I\;(\star)}\<X\>$ in $\di\text{-}\Var^{\I}\<X\>$.

\item Is it possible to embed any associative-commutative algebra into some algebra from $\di\text{-}\Lie^{\I\;(!)}$ under the anti-commutator operation?

\item Prove that
\[
\Var^{(!)}\hookrightarrow \di\text{-}\Var^{\I\;(!)},
\]
i.e., is it possible to embed any algebra from $\Var^{(!)}$ into an appropriate algebra from $\di\text{-}\Var^{\I\;(!)}$ under the operation $\star$?
\end{enumerate}


\end{document}